\newtheorem{thm}{Theorem}[section]
\newtheorem{lem}[thm]{Lemma}
\newtheorem{cor}[thm]{Corollary}
\newtheorem{prop}[thm]{Proposition}
\newtheorem{prop/def}[thm]{Proposition/Definition}
\theoremstyle{definition}
\newtheorem{defn}[thm]{Definition}
\newtheorem{ex}[thm]{Example}
\theoremstyle{remark}
\newtheorem{rem}[thm]{Remark}
\numberwithin{equation}{section}
\newcommand{\thmref}[1]{Theorem~\ref{#1}}
\newcommand{\corref}[1]{Corollary~\ref{#1}}
\newcommand{\secref}[1]{\S\ref{#1}}
\newcommand{\propref}[1]{Proposition~\ref{#1}}
\newcommand{\Hom}{\operatorname{Hom}}
\newcommand{\Ext}{\operatorname{Ext}}
\newcommand{\Rep}{\operatorname{Rep}}
\newcommand{\Sing}{\operatorname{Sing}}
\newcommand{\Inj}{\operatorname{Inj}}
\newcommand{\Surj}{\operatorname{Surj}}
\newcommand{\Epi}{\operatorname{Epi}}
\newcommand{\Fin}{\operatorname{Fin}}
\newcommand{\Iso}{\operatorname{Iso}}
\newcommand{\End}{\operatorname{End}}
\newcommand{\Char}{\operatorname{char}}
\newcommand{\A}{{\mathcal  A}}
\newcommand{\C}{{\mathcal  C}}
\newcommand{\M}{{\mathcal  M}}
\newcommand{\K}{{\mathcal  K}}
\newcommand{\V}{{\mathcal V}}
\newcommand{\F}{{\mathbb  F}}
\newcommand{\Z}{{\mathbb  Z}}
\newcommand{\PP}{{\mathcal P}}
\newcommand{\QQ}{{\mathcal Q}}
\newcommand{\ra}{\rightarrow}
\newcommand{\xra}{\xrightarrow}
\begin{document}

\title[Generic representations in nondescribing characteristic]{Generic representation theory of finite fields in nondescribing characteristic}

\author[Kuhn]{Nicholas J.~Kuhn}
\address{Department of Mathematics \\ University of Virginia \\ Charlottesville, VA 22904}
\email{njk4x@virginia.edu}
\thanks{This research was partially supported by grants from the National Science Foundation}

\date{May 1, 2014.}

\subjclass[2000]{Primary 18A25; Secondary 20G05, 20M25, 16D90}

\begin{abstract}  Let $\Rep(\F;K)$ denote the category of functors from finite dimensional $\F$--vector spaces to $K$--modules, where $\F$ is a field and $K$ is a commutative ring.  We prove that, if $\F$ is a finite field, and $\Char \F$ is invertible in $K$, then the $K$--linear abelian category $\Rep(\F;K)$ is equivalent to the product, over all $n\geq 0$, of the categories of $K[GL_n(\F)]$--modules.

As a consequence, if $K$ is also a field, then small projectives are also injective in $\Rep(\F;K)$, and will have finite length.  Even more is true if $\Char K = 0$: the category $\Rep(\F;K)$ will be semisimple.

In a last section, we briefly discuss `$q=1$' analogues and consider representations of various categories of finite sets.

The main result follows from a 1992 result by L.G.Kovacs about the semigroup ring $K[M_n(\F)]$.
\end{abstract}

\maketitle
\section{Introduction} \label{introduction}

Let $\V(\F)$ be the category of finite dimensional vector spaces over a finite field $\F$ of characteristic $p$, and let $K$ be a commutative ring, likely a field.

Then let  $\Rep(\F;K)$ denote the category whose objects are functors
$$ F: \V(\F) \ra K\text{--modules},$$
and whose morphisms are the natural transformations.

This is a $K$--linear abelian category in the usual way.  For example,
$$ 0 \ra F \ra G \ra H \ra 0$$
is short exact in $\Rep(\F;K)$ means that, for any $V \in \V(\F)$, the sequence
$$ 0 \ra F(V) \ra G(V) \ra H(V) \ra 0$$
is a short exact sequence of $K$--modules.

Our papers \cite{genrep1} -- \cite{genrepfilt} study the case when $K=\F$.  Following terminology used in those papers,  we refer $F \in \Rep(\F;K)$ as a {\em generic representation} of the field $\F$.
To explain this, note that there are evident connections with the representation theory of the general linear groups $GL_n(\F)$, as $F \in \Rep(\F;K)$ defines a family $\{F(\F^n) \ | \ n=0,1,2, \dots\}$ of $K[GL_n(\F)]$--modules via evaluation.   There is even more structure here, as $F(\F^n)$ is a module for the semigroup ring $K[M_n(\F)]$, where $M_n(\F)$ is the semigroup of all $n\times n$ matrices over $\F$. Indeed, as described in \cite{genrep2}, a generic representation $F$ is roughly the same thing as a compatible sequence of $K[M_n(\F)]$--modules for all $n$.

There has been much success studying the case when $K=\F$:
\begin{itemize}
\item Many extension groups $\Ext^*_{\Rep(\F;\F)}(F,G)$ have been calculated when $F$ and $G$ are classical functors. See, e.g.,  \cite{fls, ffss, touze}.
\item There is a deep connection between $\Rep(\F_p;\F_p)$ and the category of unstable modules over the mod $p$ Steenrod algebra of algebraic topology.  See \cite{hls1, schwartz, genrep1, genrep3}.
\item There are connections to both algebraic $K$--theory and the representation theory of algebraic groups. See, e.g., \cite{fs, betley}.
\end{itemize}

By contrast, there has been relatively little said about the structure of $\Rep(\F;K)$ when $K$ is a field of characteristic different than $p$.   Our main theorem here remedies this.

\begin{thm} \label{main thm}  Let $\F$ be a finite field of characteristic $p$.  If $p$ is invertible in a commutative ring $K$, there is a natural equivalence of $K$--linear abelian categories
$$ \Rep(\F;K) \simeq \prod_{n=0}^{\infty} K[GL_n(\F)]\text{--modules}.$$
\end{thm}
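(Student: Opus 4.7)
The plan is to bootstrap from Kovacs' theorem on the semigroup ring $K[M_N(\F)]$ to a statement about $\Rep(\F;K)$, by presenting the latter as an inverse limit of module categories at each finite level. First I would fix a skeleton of $\V(\F)$ with objects $\F^n$ for $n\geq 0$, and for each $N$ let $\V_N(\F)\subset\V(\F)$ be the full subcategory on $\F^0,\dots,\F^N$. Any functor on $\V(\F)$ is determined by its restrictions to the $\V_N(\F)$, so
$$\Rep(\F;K)\simeq \lim_N \mathrm{Fun}(\V_N(\F),K\text{--modules}).$$
Inside each $\V_N(\F)$, the object $\F^N$ is a progenerator, since every $\F^k$ with $k\leq N$ is a split retract of $\F^N$ via the standard inclusion/projection pair. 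Consequently the idempotent $\Id_{\F^N}$ in the category algebra $K[\V_N(\F)]$ is full, and the resulting Morita equivalence identifies
$$\mathrm{Fun}(\V_N(\F),K\text{--modules})\simeq K[M_N(\F)]\text{--modules},\qquad F\mapsto F(\F^N),$$
with $K[M_N(\F)]=\Id_{\F^N}\cdot K[\V_N(\F)]\cdot\Id_{\F^N}$ the semigroup ring of the full matrix semigroup.

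I would then invoke Kovacs' theorem: when $p$ is invertible in $K$, the rank filtration on $M_N(\F)$ splits $K[M_N(\F)]$ into blocks indexed by rank, with the rank-$k$ block Morita equivalent to $K[GL_k(\F)]$, yielding
$$K[M_N(\F)]\text{--modules}\simeq \prod_{k=0}^N K[GL_k(\F)]\text{--modules}.$$
The key remaining point is that the restriction $\mathrm{Fun}(\V_{N+1}(\F),K\text{--modules})\to \mathrm{Fun}(\V_N(\F),K\text{--modules})$ becomes the evident projection $\prod_{k=0}^{N+1}\to \prod_{k=0}^N$ under these equivalences. Unwinding Morita, restriction corresponds to $W\mapsto eW$ for the rank-$N$ idempotent $e=i_N p_N\in M_{N+1}(\F)$ coming from the splitting $\F^{N+1}=\F^N\oplus\F$; multiplication by $e$ preserves the strata of rank $\leq N$ and annihilates the top stratum, so it kills exactly the $(N+1)$-st Kovacs factor while restricting identically on the rest.

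Passing to the inverse limit then yields
$$\Rep(\F;K)\simeq \lim_N\prod_{k=0}^N K[GL_k(\F)]\text{--modules}= \prod_{k=0}^\infty K[GL_k(\F)]\text{--modules},$$
as claimed. I expect the main obstacle to be the compatibility step above, namely showing that the idempotent $i_N p_N$ cleanly intertwines Kovacs' central block idempotents at levels $N$ and $N+1$, so that his block decomposition is natural with respect to the inclusion $M_N(\F)\hookrightarrow M_{N+1}(\F)$ induced by the direct sum splitting. This should follow from a direct inspection of how Kovacs' block idempotents are manufactured from the rank stratification of $M_n(\F)$, but it is the principal piece of the argument beyond formal nonsense.
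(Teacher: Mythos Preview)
Your outline is correct, but it takes a genuinely different route from the paper. The paper never truncates to $\V_N(\F)$ or passes to an inverse limit. Instead it works directly in $\Rep(\F;K)$: starting only from Kov\'acs' statement that $K[\Sing_n(\F)]$ contains a two--sided unit $e_n^S$ (so that $e_n^G = 1-e_n^S$ is a central idempotent in $K[M_n(\F)]$), it defines the summand $P_n^G = P_n\cdot e_n^G$ of the representable $P_n$, proves that $P_n \simeq \bigoplus_{k\le n} gr_k(n)\, P_k^G$ (so the $P_n^G$ form a generating set) and that $\Hom(P_m^G,P_n^G)$ is $K[GL_n(\F)]$ when $m=n$ and zero otherwise, and then applies multi--object Morita theory once to the family $\{P_n^G\}_{n\ge 0}$.

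The difference matters precisely at the point you flag as the main obstacle. Your compatibility question---whether $i_Np_N$ intertwines Kov\'acs' block idempotents at levels $N$ and $N{+}1$---is exactly what the paper sidesteps by defining the $P_k^G$ globally in $\Rep(\F;K)$ rather than at a fixed level. Once the decomposition $P_N\simeq\bigoplus_k gr_k(N)P_k^G$ is proved as an isomorphism of functors, it is automatically compatible as $N$ varies, and Kov\'acs' Morita decomposition of each $K[M_N(\F)]$ drops out as a corollary rather than serving as input. Your argument instead cites that full Morita decomposition at each level and must then separately establish its naturality in $N$; this is true (and amounts to the split recollement the paper records afterward), but it is additional work that the global projective--generator argument absorbs for free.
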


Some structural results about $\Rep(\F;K)$ are immediate corollaries.

\begin{cor} \label{cor 1}  If $K$ is a field of characteristic different than $p$, then all  projectives in $\Rep(\F;K)$ are also injective, and indecomposable projectives have only finitely many composition factors.
\end{cor}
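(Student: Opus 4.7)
The plan is to transport both statements across the equivalence of \thmref{main thm} and work inside the product category $\prod_{n\geq 0} K[GL_n(\F)]\text{-mod}$. In such a product, morphisms, monics, epics, kernels, cokernels, and arbitrary direct sums are computed coordinatewise, so every lifting or extension problem decouples across indices. I would use this to identify projectives (respectively, injectives) in the product with families $(P_n)_n$ that are coordinatewise projective (respectively, injective), and to identify indecomposable objects with those supported at a single index --- each nonzero coordinate can be split off as a direct summand.

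Next I would invoke the classical fact that for any finite group $G$ and any field $K$, the group algebra $K[G]$ is a Frobenius algebra, and hence self-injective; in particular, projective and injective modules over $K[GL_n(\F)]$ coincide for every $n$. Combined with the previous paragraph, this immediately yields that every projective in $\Rep(\F;K)$ is injective. Notice that this step does not actually use $\Char K \neq p$; that hypothesis enters only through \thmref{main thm}.

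For the second statement, an indecomposable projective of $\Rep(\F;K)$ corresponds under the equivalence to an indecomposable projective module $P$ over some single $K[GL_m(\F)]$. Because $K[GL_m(\F)]$ is a finite-dimensional $K$-algebra --- in particular Artinian and semiperfect --- $P$ must be a principal indecomposable, i.e.\ a summand of the regular representation. It is thus finite-dimensional over $K$ and of finite length as a module, so has only finitely many composition factors.

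No step here poses a real obstacle: the substantive work is absorbed by \thmref{main thm}, and everything afterwards is formal together with the standard Frobenius structure on finite group algebras. The only point requiring a little care is to check that projectivity in an infinite product of module categories is coordinatewise, but this is immediate because arbitrary products of exact sequences of $K$--modules are exact.
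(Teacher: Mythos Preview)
Your argument is correct and is exactly the fleshing-out the paper has in mind: the paper simply declares \corref{cor 1} an ``immediate corollary'' of \thmref{main thm} without further comment, and the details you supply---coordinatewise identification of projectives and injectives in the product, the Frobenius property of $K[G]$ for finite $G$, and the fact that indecomposable projectives over a finite-dimensional algebra are principal indecomposables---are precisely the standard facts making that immediacy honest.
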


\begin{cor} \label{cor 2}  If $K$ is a field of characteristic 0, then $\Rep(\F;K)$ is semisimple.
\end{cor}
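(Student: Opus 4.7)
The plan is to derive the corollary directly from the main theorem together with a classical Maschke-type argument, with essentially no new work needed.

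First, since $K$ is a field of characteristic $0$, in particular $p$ is invertible in $K$, so \thmref{main thm} applies and gives an equivalence
$$ \Rep(\F;K) \simeq \prod_{n=0}^{\infty} K[GL_n(\F)]\text{--modules}.$$
Semisimplicity is preserved by equivalences of abelian categories, so it suffices to show that this product category is semisimple, i.e.\ that every short exact sequence in it splits.

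Second, I would note that a short exact sequence $0 \to (A_n) \to (B_n) \to (C_n) \to 0$ in an arbitrary product $\prod_n \A_n$ of abelian categories is short exact iff each component $0 \to A_n \to B_n \to C_n \to 0$ is, and similarly such a sequence splits iff each component splits (a compatible family of component splittings assembles to a splitting in the product). Hence the product is semisimple provided each factor is.

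Third, for each fixed $n \geq 0$, the group $GL_n(\F)$ is finite, and its order is invertible in $K$ since $\Char K = 0$. Maschke's theorem then tells us that $K[GL_n(\F)]$ is semisimple, so the category of $K[GL_n(\F)]$--modules is semisimple. Combining these observations gives the corollary; the only ingredient beyond \thmref{main thm} is Maschke's theorem, and there is no real obstacle to overcome.
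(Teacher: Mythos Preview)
Your argument is correct and is exactly the reasoning the paper has in mind: the paper simply declares this an ``immediate corollary'' of \thmref{main thm}, and the intended justification is precisely the reduction via the equivalence to Maschke's theorem for each finite group $GL_n(\F)$.
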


We remark that in $\Rep(\F;\F)$, all non-constant indecomposable projective functors have an infinite number of composition factors. These corollaries show how different the situation is in nondescribing characteristic.

Here is an example of the the computational implications of our theorem.

\begin{ex}  Let $Gr(V)$ be the set of all subspaces of a finite dimensional $\F$--vector space $V$, and let $K[Gr] \in \Rep(\F;K)$ be the functor sending $V$ to $K[Gr(V)]$, the $K$--module spanned by this set.  Suppose $p$ is invertible in $K$.  Under the correspondence of the main theorem, $K[Gr]$ corresponds to the sequence of trivial modules for the groups $GL_n(\F)$.  We conclude that there is an isomorphism of graded $K$--algebras
$$ \Ext_{\Rep(\F;K)}^*(K[Gr],K[Gr]) \simeq \prod_{n=0}^{\infty} H^*(GL_n(\F);K).$$
It is interesting to note that much about the right side of this isomorphism was computed by Quillen \cite{quillen}.
\end{ex}

The theorem will be proved in \secref{main thm proof sec}.  In some sense, the proof is formal, starting from a result of L.G.Kov\'acs \cite{kovacs} about the semigroup ring $K[M_n(\F)]$.

In \secref{related results section}, we discuss Kov\'acs' theorem and some related formulations of both his work and our main theorem.

In \secref{finite set sec}, we briefly discuss `$q=1$' analogues and consider representations of various categories of finite sets.  \\

\noindent{\bf Acknowledgements} Steven Sam has recently verified a long conjectured structural property of $\Rep(\F;\F)$ -- it is generated by Noetherian projectives -- and his proof works without change for $\Rep(\F;K)$ for arbitrary Noetherian rings $K$.  It was talks with Sam about generic representation theory that inspired the project here.

This research was done during a stay at the Mathematical Sciences Research Institute in Berkeley during the spring 2014 algebra topology program.  The author is grateful for the opportunity to be part of this program which was supported by N.S.F. grant 0932078000, and also for support through N.S.F. grant 0967649.

\section{Proof of the main theorem} \label{main thm proof sec}

\subsection{A theorem of L.G.Kov\'acs}   The key input of our proof is an elegant 1992 result of L.G.Kov\'acs \cite{kovacs}.  This followed a related announcement by Faddeev \cite{faddeev}, and related results by Okni\'nski and Putcha \cite{okninski putcha}.  Kov\'acs also thanks W.~D.~Munn for conversations about this topic, and mentions that Munn had a proof (apparently unpublished) of precisely the result we state below.

To state this, let $\Sing_n(\F) \subset M_n(\F)$ denote the set of singular matrices.   The $K$--linear span of this set $K[\Sing_n(\F)]$ is a two sided ideal in the semigroup ring $K[M_n(\F)]$, and the quotient identifies with $K[GL_n(\F)]$. Thus one has a short exact sequence
\begin{equation} \label{M G sequence}
0 \ra K[\Sing_n(\F)] \ra K[M_n(\F)] \ra K[GL_n(\F)] \ra 0.
\end{equation}

\begin{thm} \cite{kovacs} If $p$ is invertible in $K$, $K[\Sing_n(\F)]$ contains an idempotent $e_n^S$ that serves as a unit.  Thus
the short exact sequence (\ref{M G sequence}) splits as a sequence of unital $K$--algebras.
\end{thm}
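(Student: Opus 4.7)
\emph{Reduction.} It suffices to construct an idempotent $e_n^S \in K[\Sing_n(\F)]$ that is a two-sided identity for $K[\Sing_n(\F)]$. Indeed, such an $e_n^S$ is then automatically central in $K[M_n(\F)]$: for any $y \in K[M_n(\F)]$, both $e_n^S y$ and $y e_n^S$ lie in the two-sided ideal $K[\Sing_n(\F)]$, so the unit property applied on either side gives $e_n^S y = (e_n^S y) e_n^S = e_n^S (y e_n^S) = y e_n^S$. Setting $e_n^G := 1 - e_n^S$ then yields a complementary central idempotent, a ring decomposition $K[M_n(\F)] = e_n^G K[M_n(\F)] \oplus K[\Sing_n(\F)]$, and an algebra isomorphism $e_n^G K[M_n(\F)] \cong K[GL_n(\F)]$ realized by the projection in (\ref{M G sequence}).

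\emph{Construction via rank stratification.} Filter $M_n(\F)$ by rank: set $I_r = \{A \in M_n(\F) : \operatorname{rank} A \leq r\}$, a two-sided semigroup ideal of $M_n(\F)$, with $I_{n-1} = \Sing_n(\F)$. The principal factor $J_r := I_r/I_{r-1}$ is a completely $0$-simple semigroup whose non-zero $\mathcal H$-classes are copies of $GL_r(\F)$. By the classical Rees--Munn--Ponizovskii description, the contracted semigroup algebra $K[J_r]$ is isomorphic to a Munn matrix algebra $\mathcal M(K[GL_r(\F)]; P_r)$, where $P_r$ is a sandwich matrix indexed by pairs of $r$-dimensional subspaces of $\F^n$ (recording images and kernels of rank-$r$ matrices) with entries in $GL_r(\F) \cup \{0\}$. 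I would then build $e_n^S = u_0 + u_1 + \cdots + u_{n-1}$ as a sum of pairwise orthogonal idempotents, where $u_r \in K[I_r]$ is chosen to reduce modulo $K[I_{r-1}]$ to the identity of $K[J_r]$. The identity of $\mathcal M(K[GL_r(\F)]; P_r)$ can be written explicitly in terms of $P_r^{-1}$ whenever $P_r$ is invertible over $K[GL_r(\F)]$; standard idempotent-lifting then produces $u_r \in K[I_r]$ orthogonal to $u_0, \ldots, u_{r-1}$.

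\emph{Main obstacle.} The hard part, and the place where Kov\'acs' hypothesis on $p$ enters, is to show that each sandwich matrix $P_r$ is invertible over $K[GL_r(\F)]$ whenever $p$ is invertible in $K$. The entries of $P_r$ are governed by the combinatorics of pairs of $r$-dimensional subspaces of $\F^n$: $P_r(L,L')$ is a fixed element of $GL_r(\F)$ when $L$ pairs non-degenerately with $L'$ (so that the induced composition has full rank $r$), and zero otherwise. Invertibility of $P_r$ is a purely finite linear-algebra statement whose only obstruction turns out to be $p$-torsion, since all relevant counts of such non-degenerate pairings are polynomials in $q = |\F|$ with denominators only ever a power of $p$. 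This ultimately reflects the $p$-power order of the unipotent radical of $GL_n(\F)$, and is why the correct hypothesis is $p$-invertibility and not the strictly stronger invertibility of $|GL_n(\F)|$. Once each $P_r$ is inverted and the orthogonal idempotents $u_r$ are constructed, $e_n^S := \sum_r u_r$ serves as the unit of $K[\Sing_n(\F)]$, and the theorem follows from the reduction in the first paragraph.
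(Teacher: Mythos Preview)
The paper does not prove this theorem; it is quoted from Kov\'acs, with \S\ref{kovacs example} only sketching his method. That sketch is quite different from yours. Kov\'acs works directly: he observes that an element $e \in K[\Sing_n(\F)]$ is a two-sided unit if and only if it is $GL_n(\F)$--conjugation invariant and satisfies the single relation $e\cdot e_{n-1} = e_{n-1}$ (where $e_{n-1} = [I_{n-1}]$), and then solves for $e$ as a $K$--linear combination of orbit sums of what he calls \emph{semi-idempotent} matrices. Your route is instead the semigroup--theoretic one in the spirit of Munn and Okni\'nski--Putcha: stratify by rank, identify each principal factor algebra as a Munn matrix algebra, and assemble a unit from the units of the factors. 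Each approach has its virtues: Kov\'acs' is elementary and hands-on, producing $e_n^S$ by solving a small linear system over $\Z[1/p]$; yours is structural and, once completed, delivers the full decomposition $K[M_n(\F)] \simeq \prod_k M_{gr_k(n)}(K[GL_k(\F)])$ in one stroke.

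Your reduction paragraph is correct, and the inductive assembly of $e_n^S = \sum_r u_r$ is in fact easier than you indicate: once $K[I_{r-1}]$ has a unit, that unit is central in $K[I_r]$ by the very argument in your first paragraph, so $K[I_r]$ splits as a \emph{ring} direct product $K[I_{r-1}] \times K_0[J_r]$. No idempotent lifting modulo a nil ideal is needed; the orthogonal $u_r$ is simply the unit of the complementary factor.

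The genuine gap is the step you yourself flag as the main obstacle: you assert, but do not prove, that each sandwich matrix $P_r$ is invertible over $K[GL_r(\F)]$ whenever $p \in K^{\times}$. Your heuristic (``denominators only a power of $p$'', ``$p$-power order of the unipotent radical'') points in the right direction but is not an argument; one must actually compute, e.g.\ by exhibiting an explicit inverse or by reducing the invertibility of $P_r$ over $K[GL_r(\F)]$ to the invertibility over $K$ of a scalar matrix built from $q$-binomial incidence data. Until that computation is carried out, the proposal is a correct outline of a known alternative proof strategy, not a proof.
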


Note that $e_n^S$ as in this theorem is necessarily unique.  One easily sees that such an $e_n^S$ satisfies the following properties (see \secref{kovacs example}).
\begin{itemize}
\item $e_n^S$ is central in $K[M_n(\F)]$.
\item $e_n^S$ is  fixed under the transpose automorphism of $K[M_n(\F)]$.
\item  $e_n^S$ is  fixed under conjugation by any element of $GL_n(\F)$.
\end{itemize}
In \secref{kovacs example}, we will explicitly describe $e_2^S$, when $\F = \F_2$.

For the rest of the section, we assume that $p = \Char \F$ is invertible in $K$.

We name the complementary idempotent:

\begin{defn}  Let $e_n^G = 1-e_n^S \in K[M_n(\F)]$.
\end{defn}

$e_n^G$ is a central idempotent satisfying the following properties.
\begin{itemize}
\item $e_n^GK[M_n(\F)]e_n^G \simeq K[GL_n(\F)]$ as algebras.
\item $e_n^G\cdot [A] = 0$ for all $A \in \Sing_n(\F)$.
\end{itemize}

\subsection{An old family of projective generators}

\begin{defn}  Let $P_n \in \Rep(\F;K)$ be defined by letting $P_n(V) = K[\Hom(\F^n,V)]$.
\end{defn}

Yoneda's lemma tells us:
\begin{itemize}
\item There is a natural isomorphism
$$ \Hom_{\Rep(\F;\K)}(P_n,F) \simeq F(\F^n)$$
for all $\F \in \Rep(\F;\K)$.
\end{itemize}
Let $\PP = \{P_n \ | \ n=0,1,2,\dots\}$.  As every object in $\V(\F)$ is isomorphic to $\F^n$ for some $n$, we deduce:
\begin{itemize}
\item  $\PP$ is family of projective generators for $\Rep(\F;K)$.
\end{itemize}

\subsection{A new family of projective generators}

The algebra $\End_{\Rep(\F;K)}(P_n)$ identifies with $K[M_n(\F)]$, and we make the following definition.

\begin{defn}  Let $P_n^G = P_n \cdot e_n^G$, a direct summand of the projective $P_n$.
\end{defn}

Let $gr_k(n)$ be the number of $k$--dimensional subspaces of $\F^n$.  (If $\F=\F_q$, this is commonly denoted $\begin{bmatrix} n\\k \end{bmatrix}_q$.)

\begin{prop} \label{prop 1}  There is an isomorphism of functors
$$ \bigoplus_{k=0}^n gr_k(n) P_k^G \simeq P_n.$$
Thus $\PP^G = \{P_n^G \ | \ n=0,1,2, \dots\}$ is a set of projective generators for $\Rep(\F;K)$.
\end{prop}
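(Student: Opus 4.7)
The plan is to realize each summand $P_k^G$ as a sub-functor of $P_n$ via precomposition with a quotient map $\F^n \twoheadrightarrow \F^k$, and then verify the assembled map is an isomorphism by filtering $P_n(V)$ according to the kernel dimension of $f:\F^n\to V$. First I would describe $P_n^G(V)$ pointwise: since $e_n^G$ is central in $K[M_n(\F)]$ with $e_n^G\cdot[A]=0$ for $A\in\Sing_n(\F)$, and any non-injective $f$ factors as $g\circ A$ with $A\in\Sing_n(\F)$, one deduces $[f]\cdot e_n^G = [g]\cdot[A]\cdot e_n^G = 0$. Writing $e_n^G = [\Id] - e_n^S$ with $e_n^S \in K[\Sing_n(\F)]$ further shows that for $g \in \Inj(\F^n, V)$ the element $[g]\cdot e_n^G$ equals $[g]$ plus a $K$-combination of classes $[g \circ A]$ with $A$ singular (hence $g\circ A$ non-injective); so $\{[g]\cdot e_n^G : g \in \Inj(\F^n, V)\}$ is a $K$-basis of $P_n^G(V)$.

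For each subspace $W \subseteq \F^n$ of codimension $k$, I fix an identification $\F^n/W \cong \F^k$ to obtain a surjection $q_W: \F^n \twoheadrightarrow \F^k$. Precomposition with $q_W$ gives an injective natural transformation $P_k \hookrightarrow P_n$ which restricts to $\psi_W : P_k^G \to P_n$. Since there are $gr_{n-k}(n) = gr_k(n)$ such $W$'s, summing over all such subspaces of $\F^n$ (grouped by codimension) produces
$$ \Phi : \bigoplus_{k=0}^n gr_k(n)\, P_k^G \longrightarrow P_n. $$

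To show $\Phi$ is an isomorphism, I filter $P_n(V)$ by $F^r P_n(V) := K[\{f \in \Hom(\F^n, V) : \dim \ker f \geq r\}]$, whose associated graded is $\bigoplus_{W \subseteq \F^n,\, \dim W = r} K[\Inj(\F^n/W, V)]$ (a map decomposed by its kernel). I filter the domain of $\Phi$ analogously by $\dim W \geq r$. The key computation is that for $g \in \Inj(\F^k, V)$ and $A \in M_k(\F)$,
$$ \ker(g \circ A \circ q_W) \;=\; q_W^{-1}(\ker A), $$
of dimension $\dim W$ if $A \in GL_k(\F)$ and strictly larger otherwise. Since $e_k^G = [\Id] - e_k^S$ with $e_k^S$ supported on singular matrices, only $A = \Id$ survives modulo $F^{\dim W + 1}$, giving $\psi_W([g]\cdot e_k^G) \equiv [g \circ q_W] \pmod{F^{\dim W + 1}}$. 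On associated gradeds, $\Phi$ sends the basis $\{[g] \cdot e_k^G\}$ of the $W$-summand bijectively onto the basis $\{[g \circ q_W]\}$ of the $W$-component of $F^{\dim W}/F^{\dim W + 1}$; hence $\Phi$ is an isomorphism. The final assertion about $\PP^G$ then follows at once since $\PP$ was already a set of projective generators and each $P_n$ now decomposes into $P_k^G$'s.

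The main obstacle is controlling the cross-interaction between different $W$: each $\psi_W([g]\cdot e_k^G)$ has components in many kernel strata $\geq \dim W$, so a direct basis comparison is clumsy. The kernel-dimension filtration isolates each $W$'s leading contribution in its own graded piece, cleanly separating the summands and reducing the entire claim to the obvious bijection of bases within each graded piece.
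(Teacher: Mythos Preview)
Your argument is correct and rests on the same underlying decomposition as the paper's proof: stratify $\Hom(\F^n,V)$ by the kernel $W$ of a map, and identify each stratum with $\Inj(\F^n/W,V)$.  The paper packages this a bit more cleanly.  It first records the characteristic-free set-level bijection as an isomorphism
\[
\bigoplus_{k=0}^n K[\Inj(\F^k,V)]\otimes_{K[GL_k(\F)]}K[\Surj(\F^n,\F^k)] \xrightarrow{\ \sim\ } K[\Hom(\F^n,V)],
\]
then invokes the idempotent only to identify $K[\Inj(\F^k,V)]\cong P_k^G(V)$ (your first paragraph), and finally observes that $K[\Surj(\F^n,\F^k)]$ is a free left $K[GL_k(\F)]$-module on the set of codimension-$k$ subspaces $W$ (your choice of the $q_W$).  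Your filtration argument is exactly the verification of that first displayed isomorphism, done after multiplying by $e_k^G$; the paper's separation of the combinatorial step from the use of $e_k^G$ avoids the bookkeeping about which strata the ``singular tail'' of $[g]\cdot e_k^G$ lands in, but the content is identical.
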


We defer the proof to later in the section.

\begin{prop} \label{prop 2}
\begin{equation*}
\Hom_{\Rep(\F;K)}(P^G_m, P^G_n) =
\begin{cases}
K[GL_n(\F)] & \text{if } m=n \\ 0 & \text{if } m \neq n.
\end{cases}
\end{equation*}
\end{prop}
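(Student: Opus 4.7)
My plan is to combine Yoneda's lemma with the properties of $e_k^G$ established above. By Yoneda, $\Hom_{\Rep(\F;K)}(P_m, P_n) \cong P_n(\F^m) = K[\Hom(\F^n,\F^m)]$, and this identification intertwines composition of natural transformations with composition of $\F$-linear maps. In particular, $K[\Hom(\F^n,\F^m)]$ carries an $(\End(P_m), \End(P_n))$-bimodule structure in which pre-composition by $C \in K[M_m(\F)] = \End(P_m)$ acts via $[A] \mapsto [C \circ A]$ and post-composition by $B \in K[M_n(\F)] = \End(P_n)$ acts via $[A] \mapsto [A \circ B]$. Since $P_k^G$ is the image of the idempotent $e_k^G \in \End(P_k)$, the subspace $\Hom(P^G_m, P^G_n) \subset \Hom(P_m, P_n)$ is identified under Yoneda with $e_m^G \cdot K[\Hom(\F^n,\F^m)] \cdot e_n^G$.

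When $m = n$, this subspace is exactly $e_n^G K[M_n(\F)] e_n^G$, which is isomorphic to $K[GL_n(\F)]$ by the first property listed for $e_n^G$ just after its definition.

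When $m \neq n$, the key observation is that every $\F$-linear map $\F^n \to \F^m$ has rank at most $\min(m,n)$, so after an equivariant embedding into $K[M_{\max(m,n)}(\F)]$ every element of the bimodule lies in the singular ideal which $e^G$ annihilates. Concretely, if $m < n$, fix a linear injection $\iota \colon \F^m \hookrightarrow \F^n$; then $[A] \mapsto [\iota \circ A]$ gives an injective right $K[M_n(\F)]$-module map $K[\Hom(\F^n,\F^m)] \hookrightarrow K[M_n(\F)]$ whose image lies in $K[\Sing_n(\F)]$, since $\iota \circ A$ has rank at most $m < n$. Because $e_n^G$ is central and $e_n^G \cdot K[\Sing_n(\F)] = 0$, right multiplication by $e_n^G$ must vanish on $K[\Hom(\F^n,\F^m)]$. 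The case $m > n$ is symmetric: fix a linear surjection $\pi \colon \F^m \twoheadrightarrow \F^n$ and use the injective left $K[M_m(\F)]$-module map $[A] \mapsto [A \circ \pi]$, whose image lies in $K[\Sing_m(\F)]$, to conclude that left multiplication by $e_m^G$ vanishes.

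The main piece of bookkeeping is tracking the left/right actions of the two endomorphism algebras through Yoneda (since the natural map $\End(P_n) \to K[M_n(\F)]$ is strictly speaking an anti-isomorphism); this does not affect the conclusion, since centrality of $e_k^G$ and the annihilation $e_k^G \cdot K[\Sing_k(\F)] = 0$ inherited from Kov\'acs' theorem are insensitive to the handedness convention.
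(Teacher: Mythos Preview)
Your proof is correct and follows essentially the same approach as the paper: both identify $\Hom(P_m^G,P_n^G)$ with $e_m^G\,K[\Hom(\F^n,\F^m)]\,e_n^G$ via Yoneda, settle the case $m=n$ by the listed property $e_n^G K[M_n(\F)]e_n^G\simeq K[GL_n(\F)]$, and for $m\neq n$ exploit that every linear map $\F^n\to\F^m$ has rank $<\max(m,n)$ so is annihilated by the relevant $e^G$. The only cosmetic difference is that the paper factors each $A\in M_{m,n}(\F)$ as $A=A'B$ with $B\in\Sing_n(\F)$, whereas you embed $K[\Hom(\F^n,\F^m)]$ into $K[\Sing_{\max(m,n)}(\F)]$ via a fixed injection or surjection; both devices express the same rank observation.
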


We also defer the proof of this.

Since $\End_{\Rep(\F;K)}(P_n) = K[M_n(\F)]$, the two propositions combine to prove what was stated as the main theorem in \cite{kovacs}.

\begin{cor} \label{KM decomp cor}  If $p$ is invertible in $K$, there is an isomorphism of $K$--algebras
$$ K[M_n(\F)] \simeq \prod_{k=0}^n M_{gr_k(n)}(K[GL_k(\F)]).$$
\end{cor}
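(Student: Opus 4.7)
The plan is to compute $\End_{\Rep(\F;K)}(P_n)$ in two different ways: directly (giving $K[M_n(\F)]$), and via the decomposition supplied by Proposition \ref{prop 1}, where Proposition \ref{prop 2} forces the answer to split as a product of matrix algebras over the group rings $K[GL_k(\F)]$.

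More concretely, I would argue as follows. By Proposition \ref{prop 1},
$$P_n \simeq \bigoplus_{k=0}^n (P_k^G)^{\oplus gr_k(n)}$$
in $\Rep(\F;K)$. Applying $\End_{\Rep(\F;K)}(-)$ and using that direct sums into an abelian category dualize endomorphism algebras into a matrix of Hom-groups, one gets
$$\End_{\Rep(\F;K)}(P_n) \simeq \bigoplus_{k,\ell=0}^n \Hom_{\Rep(\F;K)}\!\bigl((P_\ell^G)^{\oplus gr_\ell(n)},(P_k^G)^{\oplus gr_k(n)}\bigr),$$
with the algebra structure given by matrix-style composition. Proposition \ref{prop 2} then kills every term with $k \neq \ell$, so the sum reduces to the ``diagonal'' direct product
$$\End_{\Rep(\F;K)}(P_n) \simeq \prod_{k=0}^n \End_{\Rep(\F;K)}\bigl((P_k^G)^{\oplus gr_k(n)}\bigr).$$
For each fixed $k$, the endomorphism algebra of an $r$-fold direct sum $(P_k^G)^{\oplus r}$ in an additive $K$-linear category is the $r \times r$ matrix algebra over $\End(P_k^G)$, and by Proposition \ref{prop 2} the latter is $K[GL_k(\F)]$. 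Taking $r = gr_k(n)$ and combining with the identification $\End_{\Rep(\F;K)}(P_n) = K[M_n(\F)]$ from the start of the subsection gives
$$K[M_n(\F)] \simeq \prod_{k=0}^n M_{gr_k(n)}(K[GL_k(\F)]),$$
as desired.

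The argument is entirely formal once Propositions \ref{prop 1} and \ref{prop 2} are in hand; the only thing to be careful about is a possible opposite-algebra convention when identifying $\End(P_n)$ with $K[M_n(\F)]$ (and with the matrix algebras on the right), but since $K[M_n(\F)]$ admits the transpose anti-involution intertwining the two conventions, this causes no trouble. The genuinely substantive work is hidden in the two propositions being quoted; the corollary itself is essentially the bookkeeping that turns a Morita-type decomposition of projective generators into an explicit algebra isomorphism.
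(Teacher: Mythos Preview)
Your proof is correct and is essentially the same argument the paper gives: compute $\End_{\Rep(\F;K)}(P_n)$ both directly as $K[M_n(\F)]$ and via the decomposition of $P_n$ from Proposition~\ref{prop 1}, with Proposition~\ref{prop 2} collapsing the block matrix of Hom's to a product of matrix algebras over the $K[GL_k(\F)]$. Your remark about the opposite-algebra convention and its resolution by transpose is a nice touch that the paper leaves implicit.
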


\subsection{Proof of \thmref{main thm}}

The main theorem will follow immediately from the last two propositions, using the `multiobject' version of classic Morita equivalence.

One has the following general situation.  Suppose $\QQ$ is a set of objects in a $K$--linear abelian category $\A$.  Let $\End(\QQ)$ be the full subcategory of $\A$ whose set of objects is $\QQ$.  Then let $\End(\QQ)\text{--mod}$ denote the category of contravariant $K$--linear functors from $\End(\QQ)$ to $K\text{--mod}$.

A functor
$$\Theta: \A \ra \End(\QQ)\text{--mod}$$
is defined as follows.  Given $A \in \A$ and $Q \in \QQ$, let $$ \Theta(A)(Q) = \Hom_{\A}(Q,A).$$

The functor $\Theta$ is always the right half of an adjoint pair $(\Psi,\Theta)$ where $\Psi(\M) = \M \otimes_{\End(\QQ)} \QQ$, suitably interpreted.

The multiobject Morita theorem goes as follows.

\begin{lem}  If $\QQ$ is set of small projective generators for $\A$, then $\Psi$ and $\Theta$ are inverse equivalences of $K$--linear abelian categories.
\end{lem}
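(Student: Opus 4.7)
The plan is to prove that the unit $\eta \colon \Id_{\End(\QQ)\text{--mod}} \ra \Theta\Psi$ and counit $\epsilon \colon \Psi\Theta \ra \Id_\A$ of the adjunction $(\Psi,\Theta)$ are natural isomorphisms. The standard strategy is to verify each on a convenient class of objects (representables on one side, objects of $\QQ$ on the other) and then extend to all objects by means of presentations, which exist by the generator hypothesis.

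First I would check the base case. For $Q \in \QQ$, $\Theta(Q)$ is the representable functor $h_Q = \Hom_{\End(\QQ)}(-,Q)$, and a coend/Yoneda computation identifies $\Psi(h_Q) = h_Q \otimes_{\End(\QQ)} \QQ$ canonically with $Q$ in such a way that $\epsilon_Q$ and $\eta_{h_Q}$ become the identity maps. Since the objects of $\QQ$ are small, $\Theta$ preserves arbitrary coproducts of objects in $\QQ$; and since $\Psi$ is a left adjoint, it preserves all colimits. Therefore $\epsilon$ and $\eta$ are already isomorphisms on arbitrary direct sums $\bigoplus_i Q_i$ in $\A$ and $\bigoplus_i h_{Q_i}$ in $\End(\QQ)\text{--mod}$.

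For the inductive step, let $A \in \A$ be arbitrary. The generator hypothesis yields a presentation $\bigoplus_j Q_j \ra \bigoplus_i Q_i \ra A \ra 0$ with all $Q_i, Q_j \in \QQ$. Projectivity of the $Q \in \QQ$ makes $\Theta$ exact, so $\Theta$ sends this to an analogous exact sequence; applying the right exact functor $\Psi$ and comparing with the original presentation via $\epsilon$ gives a diagram of cokernels whose first two columns are isomorphisms by the previous paragraph. A short five-lemma chase then forces $\epsilon_A$ to be an isomorphism. The dual argument establishes that $\eta_M$ is an isomorphism for every $M \in \End(\QQ)\text{--mod}$, using that the representables $h_Q$ form a family of small projective generators in this functor category, so that every such $M$ admits an analogous presentation by representables.

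The main obstacle is bookkeeping: one must check that each of the three hypotheses is invoked at the correct moment. Smallness is used precisely so that $\Theta$ commutes with the coproducts appearing in the presentation of $A$; projectivity is used so that $\Theta$ applied to the presentation remains exact; and the generator property supplies the presentations themselves. Once these roles are cleanly separated, the argument reduces to the Yoneda/coend calculation on representables plus a routine diagram chase, and the fact that $\Psi$ and $\Theta$ are $K$-linear and preserve the abelian structure then yields the stated equivalence of $K$-linear abelian categories.
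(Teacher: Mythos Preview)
Your argument is correct and is precisely the standard proof of the multiobject Morita theorem: verify the unit and counit on representables via Yoneda, propagate to coproducts using smallness, and then to arbitrary objects via presentations and a five-lemma argument, with projectivity supplying exactness of $\Theta$ and the generator hypothesis supplying the presentations.

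There is nothing to compare against, however: the paper does not prove this lemma at all. It is stated as the ``multiobject Morita theorem'' and simply invoked, with the proof left to the reader as a well-known piece of folklore. Your write-up thus fills in exactly the details the paper chose to omit.
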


\thmref{main thm} now follows by letting $\A = \Rep(\F;K)$ and $\QQ = \PP^G$.  \propref{prop 1} tells us that $\PP^G$ is a set of small projective generators for $\Rep(\F;K)$, so that
$$\Rep(\F;K) \simeq \End(\PP^G)\text{--mod},$$
and then \propref{prop 2} shows that
$$\End(\PP^G)\text{--mod} \simeq \prod_{n=0}^{\infty} K[GL_n(\F)]\text{--mod}.$$

It remains to prove the two propositions.

\subsection{Proof of \propref{prop 1}}
Let $\Inj(W,V)$ and $\Surj(W,V)$ be the sets of injective and surjective linear maps in $\Hom(W,V)$.
Then $K[\Inj(\F^k,V)]$ is a free right $K[GL_k(\F)]$--module and $K[\Surj(W,\F^k)]$ is a free left $K[GL_k(\F)]$--module.

\begin{lem}  Composition of linear maps induces an isomorphism of $K$--modules
$$ \bigoplus_{k=0}^n K[\Inj(\F^k,V)] \otimes_{K[GL_k(\F)]} K[\Surj(\F^n,\F^k)] \xra{\sim} K[\Hom(\F^n,V)].$$
\end{lem}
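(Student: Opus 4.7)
The strategy is to exploit the unique epi-mono factorization of linear maps between vector spaces.

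First, I would verify that the composition map $(i, s) \mapsto i \circ s$ descends to the tensor product: for $g \in GL_k(\F)$, the identity $(i \circ g) \circ s = i \circ (g \circ s)$ says precisely that this map is $K[GL_k(\F)]$-balanced, so it defines a $K$-linear map on each summand of the left-hand side.

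Second, I would pin down a convenient $K$-basis for the left-hand side. Since $GL_k(\F)$ acts freely on $\Inj(\F^k, V)$ by precomposition (an injection has at most one left inverse on its image) and freely on $\Surj(\F^n, \F^k)$ by postcomposition, the modules $K[\Inj(\F^k, V)]$ and $K[\Surj(\F^n, \F^k)]$ are respectively free as right and left $K[GL_k(\F)]$-modules. The tensor product therefore acquires a $K$-basis indexed by the orbits of the diagonal action $g \cdot (i, s) = (i g^{-1}, g s)$, an action that is also free.

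Third --- the crux of the argument --- I would establish a bijection, rank by rank, between these orbits and $\Hom(\F^n, V)$. For any pair $(i, s)$, the image of $i \circ s$ equals $\im(i)$ and so has dimension exactly $k$. Conversely, given $f \in \Hom(\F^n, V)$ with $\dim \im(f) = k$, choosing any isomorphism $\varphi: \F^k \xra{\sim} \im(f)$ produces a factorization $f = (\iota \circ \varphi) \circ (\varphi^{-1} \circ \pi)$, where $\pi: \F^n \ora \im(f)$ and $\iota: \im(f) \hra V$ are the canonical quotient and inclusion; any two such factorizations of the same $f$ differ by a unique element of $GL_k(\F)$, so the orbits of rank-$k$ pairs biject with the rank-$k$ maps in $\Hom(\F^n, V)$.

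Finally, summing over $k = 0, 1, \ldots, n$ and using that $\Hom(\F^n, V)$ partitions by the rank of the map recovers the desired isomorphism. The argument is essentially elementary linear algebra; the only care required is keeping the left/right $GL_k(\F)$-actions straight when identifying the basis of the tensor product.
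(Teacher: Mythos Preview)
Your argument is correct and is precisely the standard epi--mono factorization argument the paper has in mind; the paper itself states this lemma without proof, treating it as evident from the fact that every linear map factors uniquely (up to $GL_k(\F)$) as a surjection followed by an injection. Your careful identification of the $K$-basis of the tensor product with $GL_k(\F)$-orbits of pairs $(i,s)$ is exactly what is needed to make the bijection rigorous.
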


Assuming $p$ is invertible in $K$, we can use our idempotents $e_k^G \in K[M_k(\F)]$ to write this isomorphism in a way that exhibits naturality in $V$.

The inclusion
$\Inj(\F^k,V) \subset \Hom(\F^k,V)$
induces an isomorphism of right $K[GL_k(V)]$--modules
$$ K[\Inj(\F^k,V)] \xra{\sim} K[\Hom(\F^k,V)]e_k^G$$

The isomorphism of the lemma rewrites as a natural isomorphism
$$ \bigoplus_{k=0}^n K[\Hom(\F^k,V)]e_k^G \otimes_{K[GL_k(\F)]} K[\Surj(\F^n,\F^k)] \xra{\sim} K[\Hom(\F^n,V)].$$

Otherwise said, we have verified the next corollary.
\begin{cor}  There is an isomorphism of functors
$$ \bigoplus_{k=0}^n P_k^G \otimes_{K[GL_k(\F)]} K[\Surj(\F^n,\F^k)] \xra{\sim} P_n.$$
\end{cor}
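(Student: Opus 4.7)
The plan is to obtain the corollary as an immediate functor-level recasting of the natural isomorphism displayed just above it. Unwinding Yoneda, the right $K[M_k(\F)]$-action on $P_k$ is by precomposition, so the very definition $P_k^G = P_k \cdot e_k^G$ gives $P_k^G(V) = K[\Hom(\F^k,V)]\,e_k^G$, natural in $V$. Granted this, together with the lemma above, the whole question reduces to establishing the right $K[GL_k(\F)]$-module isomorphism
$$K[\Inj(\F^k,V)] \xra{\sim} K[\Hom(\F^k,V)]\,e_k^G$$
claimed in the paragraph preceding the corollary.

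I would prove this identification using the two defining features of the idempotent $e_k^G$: it is central in $K[M_k(\F)]$, and it annihilates $K[\Sing_k(\F)]$ (because $e_k^S = 1 - e_k^G$ is a two-sided unit of that ideal). First, right multiplication by $e_k^G$ kills every non-injective basis element $f \in \Hom(\F^k,V)$: choose a linear projection $\pi\in\End(\F^k)$ onto any complement of $\ker f$, so that $\pi\in\Sing_k(\F)$ and $f\circ\pi = f$; then $f\cdot e_k^G = (f\cdot \pi)\cdot e_k^G = f\cdot(\pi\cdot e_k^G) = 0$. Second, for any $g \in \Sing_k(\F)$ and any $f$, the composite $f\circ g$ is non-injective (its kernel contains $\ker g$); hence $f\cdot e_k^S$ lies in the $K$-span of the non-injective maps, so the $K$-linear projection onto $K[\Inj(\F^k,V)]$ sends $f\cdot e_k^G = f - f\cdot e_k^S$ to $f$ whenever $f$ is injective. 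These two observations together show that $f\mapsto f\cdot e_k^G$ is a $K$-linear bijection $K[\Inj(\F^k,V)] \to K[\Hom(\F^k,V)]\,e_k^G$: injectivity because the injective projection is a left inverse, surjectivity because the non-injective span is annihilated so $K[\Hom(\F^k,V)]\,e_k^G = K[\Inj(\F^k,V)]\cdot e_k^G$. The map is $K[GL_k(\F)]$-equivariant by centrality of $e_k^G$ and visibly natural in $V$.

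Plugging this isomorphism into the lemma then gives the corollary. The main obstacle is precisely this last identification: $\Inj(\F^k,V)$ is only $GL_k(\F)$-stable and not stable under the full $K[M_k(\F)]$-action on $\Hom(\F^k,V)$, so $K[\Inj(\F^k,V)]$ is not visibly a summand of $P_k(V)$; realising it as $P_k(V)\cdot e_k^G$ requires exactly the existence and algebraic properties of the Kov\'acs idempotent.
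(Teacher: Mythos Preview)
Your proposal is correct and follows essentially the same route as the paper: rewrite the lemma's $K[\Inj(\F^k,V)]$ term as $K[\Hom(\F^k,V)]e_k^G = P_k^G(V)$ and then read off the corollary. In fact you supply more than the paper does, since the paper merely asserts the isomorphism $K[\Inj(\F^k,V)] \xra{\sim} K[\Hom(\F^k,V)]e_k^G$ while you give a careful verification using the annihilation property of $e_k^G$ on singular matrices.
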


Now we decompose $K[\Surj(\F^n,\F^k)]$.  The kernel (a.k.a. nullspace) of any surjection $A: \F^n \ra \F^k$ will be a subspace of $\F^n$ of codimension $k$, and this kernel is invariant under left multiplication by elements in $GL_k(\F)$.  Thus we have the following.

\begin{lem}  There is an isomorphism of left $K[GL_k(\F)]$--modules
$$K[\Surj(\F^n,\F^k)] \simeq \bigoplus_{W} K[GL_k(\F)],$$
where the sum runs over $W < \F^n$ of codimension $k$.
\end{lem}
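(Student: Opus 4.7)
The plan is to decompose the set $\Surj(\F^n,\F^k)$ according to kernel and observe that each piece is a free $GL_k(\F)$-orbit.

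First, partition $\Surj(\F^n,\F^k) = \bigsqcup_W \Surj_W(\F^n,\F^k)$, where $W$ ranges over the codimension-$k$ subspaces of $\F^n$ and $\Surj_W(\F^n,\F^k)$ denotes the set of surjections with kernel exactly $W$. This partition is well defined because every surjection $A\colon \F^n \ra \F^k$ has a unique kernel of codimension $k$, and the partition is preserved by left multiplication by $GL_k(\F)$ (since $\ker(gA) = \ker(A)$ for $g \in GL_k(\F)$). Consequently, the $K$--module $K[\Surj(\F^n,\F^k)]$ splits as a direct sum of $K[GL_k(\F)]$--submodules
\[ K[\Surj(\F^n,\F^k)] = \bigoplus_W K[\Surj_W(\F^n,\F^k)]. \]

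Next, for a fixed codimension-$k$ subspace $W \subset \F^n$, I would verify that $GL_k(\F)$ acts freely and transitively on $\Surj_W(\F^n,\F^k)$ by left multiplication. Given two surjections $A, B\colon \F^n \ora \F^k$ with the same kernel $W$, the quotient $\F^n/W$ is a $k$--dimensional vector space and both maps factor through isomorphisms $\bar A, \bar B\colon \F^n/W \xra{\sim} \F^k$; then $g = \bar B \bar A^{-1} \in GL_k(\F)$ is the unique element with $gA = B$. This proves both transitivity and freeness.

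Finally, choose (arbitrarily) a basepoint $A_W \in \Surj_W(\F^n,\F^k)$ for each codimension-$k$ subspace $W$. The map $g \mapsto gA_W$ gives a bijection $GL_k(\F) \xra{\sim} \Surj_W(\F^n,\F^k)$ of left $GL_k(\F)$--sets, which $K$--linearizes to an isomorphism $K[GL_k(\F)] \xra{\sim} K[\Surj_W(\F^n,\F^k)]$ of left $K[GL_k(\F)]$--modules. Summing over $W$ yields the stated isomorphism. The argument is entirely elementary; the only mild subtlety is that the decomposition requires a choice of basepoint in each orbit and hence is not canonical, but this is harmless for the purposes of its subsequent use.
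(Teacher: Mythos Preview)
Your proposal is correct and follows exactly the approach the paper takes: the paper's entire argument is the sentence preceding the lemma, observing that the kernel of a surjection $A\colon \F^n \to \F^k$ has codimension $k$ and is invariant under left multiplication by $GL_k(\F)$. You have simply filled in the details (free transitivity on each fiber, choice of basepoints) that the paper leaves implicit.
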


As $\{W < \F^n \ | \ \text{codim } W =k\}$ has cardinality $gr_k(n)$, the last lemma and corollary imply \propref{prop 1}.

\subsection{Proof of \propref{prop 2}}

Recall that each $e_n^G$ is a central idempotent in $K[M_n(\F)]$ satisfying the following two properties:
\begin{itemize}
\item $e_n^GK[M_n(\F)]e_n^G \simeq K[GL_n(\F)]$ as algebras.
\item $[B] \cdot e_n^G = 0$ for all $B \in \Sing_n(\F)$.
\end{itemize}

By construction, we have isomorphisms
$$ \Hom_{\Rep(\F;K)}(P_m^G,P_n^G) = e_m^GK[M_{m,n}(\F)]e_n^G,$$
where we have identified $\Hom(\F^n,\F^m)$ with $M_{m,n}(\F)$, the set of $m\times n$ matrices over $\F$.

Thus our first property says that $\End_{\Rep(\F;K)}(P_m^G)\simeq K[GL_n(\F)]$.

If $m<n$, then every element in $e_m^GK[M_{m,n}(\F)]$ can be written as a linear combination of terms of the form $[AB]$, where $A \in M_{m,n}(\F)$ and $B \in \Sing_n(\F)$.  The second property above tells us that for such $A$ and $B$, $[AB]\cdot e_n^G = [A]([B]\cdot e_n^G) = 0$, so that $\Hom_{\Rep(\F;K)}(P_m^G,P_n^G) = 0$.

There is a similar proof when $m>n$; this case also follows from the one just proved by using transpose of matrices.

\section{Further remarks and related results} \label{related results section}

\subsection{On Kov\'acs' theorem} \label{kovacs example}

We make a few observations as an aid to those readers who might wish to better understand Kov\'acs' proof that $K[\Sing_n(\F)]$ contains a unit.  These are also explicitly or implicitly said in \cite{kovacs}.

Let $e_{n-1} = [I_{n-1}] \in K[\Sing_n(\F)]$ where $I_{n-1}$ is the $n \times n$ matrix which has 1's on the first $(n-1)$ diagonal entries and is zero elsewhere.

\begin{lem}  An element $e \in K[\Sing_n(\F)]$ is two sided unit if and only if it satisfies the following two properties:
\begin{itemize}
\item $e$ is invariant under the conjugation action of $GL_n(\F)$ on $K[\Sing_n(\F)]$.
\item $ee_{n-1} = e_{n-1}$.
\end{itemize}
\end{lem}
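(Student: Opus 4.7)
The forward direction is routine. A two-sided unit of an associative $K$-algebra is unique when it exists, so every algebra automorphism of $K[\Sing_n(\F)]$ must fix $e$. Conjugation by $g \in GL_n(\F)$ preserves singularity, hence restricts to such an automorphism of $K[\Sing_n(\F)]$ and so fixes $e$; and $e \cdot e_{n-1} = e_{n-1}$ follows immediately because $e_{n-1} \in K[\Sing_n(\F)]$.

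For the converse, the strategy is to show separately that $e$ acts as a left and as a right identity on $K[\Sing_n(\F)]$. Conjugation invariance translates into the commutation relation $e [g] = [g] e$ for every $g \in GL_n(\F)$. Any $A \in \Sing_n(\F)$ has rank $r \leq n - 1$ and, by Gaussian elimination over $\F$, factors as $A = g J_r h$ with $g,h \in GL_n(\F)$ and $J_r$ the matrix with $r$ leading diagonal $1$'s. Since $r \leq n - 1$ we have $I_{n-1} J_r = J_r$, and a short manipulation (commute $e$ past $[g]$, then apply $e \cdot e_{n-1} = e_{n-1}$) gives $e \cdot [A] = [A]$. Extending by linearity, $e$ is a left identity.

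The right-identity step is the main obstacle: what we really need is $e_{n-1} \cdot e = e_{n-1}$, and this is not directly among the hypotheses. The plan is to upgrade conjugation invariance to transpose invariance, by invoking the classical theorem that every matrix over a field is similar to its transpose. This implies each $GL_n(\F)$-conjugacy class in $M_n(\F)$ is closed under the transpose, so every conjugation-invariant element of $K[M_n(\F)]$, being a combination of class sums, is also fixed by the transpose anti-automorphism. Hence $e^T = e$, and applying transpose to $e \cdot e_{n-1} = e_{n-1}$ (using $I_{n-1}^T = I_{n-1}$) yields $e_{n-1} \cdot e = e_{n-1}$. The right-identity computation is now the obvious mirror of the left-identity one, with $J_r I_{n-1} = J_r$ in place of $I_{n-1} J_r = J_r$, completing the proof.
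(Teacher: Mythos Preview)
Your argument is correct and follows the same overall strategy as the paper: show $e$ is a left unit by factoring an arbitrary singular matrix through a conjugate of $I_{n-1}$, using conjugation invariance together with $ee_{n-1}=e_{n-1}$, and then pass to the right-unit statement via the transpose anti-automorphism.

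The only real difference is in how the transpose step is executed. You invoke the (nontrivial) theorem that every matrix over a field is similar to its transpose to conclude $e^T=e$ directly, then rerun the left-unit computation on the other side after deducing $e_{n-1}e=e_{n-1}$. The paper takes a shorter route: once $e$ is known to be a left unit, applying the transpose anti-automorphism shows $e^T$ is a right unit, and in any associative ring a left unit and a right unit must coincide ($e = e\cdot e^T = e^T$). So $e$ is automatically two-sided, and the similar-to-transpose theorem is never needed. Your approach is fine, but the paper's is more economical.
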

\begin{proof}  Suppose that $e$ satisfies the two properties.  Any $A \in \Sing_n(\F)$ admits a decomposition of the form $A=BC$, where $B$ is an idempotent of rank $n-1$ and so is conjugate to $I_{n-1}$.  The two properties imply that $e[B] = [B]$, and thus $e[A] = (e[B])[C] = [B][C] = [A]$.  In other words, $e$ is a left unit for $K[\Sing_n(\F)]$.

Taking the transpose is an antiautomorphism of $K[\Sing_n(\F)]$, so $e^T$ is a right unit for $K[\Sing_n(\F)]$.  But then $e=e^T$ is a two sided unit.

The other implication of the lemma is immediate.
\end{proof}

Elements in $K[\Sing_n(\F)]^{GL_n(\F)}$ are linear combinations of orbit sums.  Kov\'acs discovers that one only needs to use conjugacy classes of matrices he terms `semi-idempotent': matrices $E$ that act as the identity on the image of $E^N$ for $N>>0$.  Combined with the last lemma, one is well on one's way to finding the unit $e \in K[\Sing_n(\F)]$.

\begin{ex}  We show how this all works when $n=2$ and $\F=\F_2$, the field with two elements.  There are three relevant orbit sums, and we find ourselves looking for $a,b,c \in \Z[\frac{1}{2}]$ such that
\begin{equation*}
\begin{split}
e & = a\left\{\begin{bmatrix} 1&0 \\ 0&0 \end{bmatrix} +   \begin{bmatrix} 1&1 \\ 0&0 \end{bmatrix} +   \begin{bmatrix} 0&0 \\ 1&1 \end{bmatrix} +   \begin{bmatrix} 1&0 \\ 1&0 \end{bmatrix} +   \begin{bmatrix} 0&0 \\ 0&1 \end{bmatrix} +   \begin{bmatrix} 0&1 \\ 0&1 \end{bmatrix} \right\} \\
  & +  b\left\{\begin{bmatrix} 0&0 \\ 1&0 \end{bmatrix} + \begin{bmatrix} 1&1 \\ 1&1 \end{bmatrix} + \begin{bmatrix} 0&1 \\ 0&0 \end{bmatrix} \right\} + c \begin{bmatrix} 0&0 \\ 0&0 \end{bmatrix}
\end{split}
\end{equation*}
satisfies $e\begin{bmatrix} 1&0 \\ 0&0 \end{bmatrix} = \begin{bmatrix} 1&0 \\ 0&0 \end{bmatrix}$.
\end{ex}

This leads to the system of equations
\begin{equation*}
\begin{split}
1 & = 2a \\
0 & = a+b \\
0 & = 2a+b+c,
\end{split}
\end{equation*}
which has solution $a=1/2$, $b=-1/2$, and $c=-1/2$.

\subsection{Split recollement}  As described in \cite{genrep2}, for any fields $K$ and $\F$, one has a recollement diagram:
$$ K[GL_n(\F)]\text{--mod}
\begin{array}{c} \stackrel{q}{\longleftarrow} \\[-.08in]
\stackrel{i}{\longrightarrow} \\[-.08in] \stackrel{p}{\longleftarrow}
\end{array}
K[M_n(\F)]\text{--mod}
\begin{array}{c} \stackrel{l}{\longleftarrow} \\[-.06in] \stackrel{e}{\longrightarrow} \\[-.08in]
\stackrel{r}{\longleftarrow}
\end{array} K[M_{n-1}(\F)]\text{--mod}.$$
In this diagram, the functors $i$ and $e$ are exact, with left adjoints $q$ and $l$, and right adjoints $p$ and $r$.  The natural maps
$$ l(e(N)) \ra N \text{ and } N \ra e(r(N))$$
are both isomorphisms for all $K[M_{n-1}(\F)]$--modules $N$.

Explicitly, $i$ is the pullback of modules via the quotient map $K[M_n(\F)] \ra K[GL_n(\F)]$, while
$$e(M) = e_{n-1}M,$$
$$l(N) = K[M_n(\F)]e_{n-1}\otimes_{K[M_{n-1}(\F)]}N,$$
 and
$$r(N) = \Hom_{K[M_{n-1}(\F)]}(e_{n-1}K[M_n(\F)],N).$$

Kov\'acs' theorem is easily seen to be equivalent to the statement that, when $\F$ is a finite field of of characteristic $p$ and $\frac{1}{p} \in K$, $l(N) \simeq r(N)$ and thus are both exact, and similarly $q(M) \simeq p(M)$.  It follows that the assigment $M \mapsto (e(M),q(M))$ induces an equivalence of $K$--linear abelian categories
$$ K[M_n(\F)]\text{--mod} \simeq K[M_{n-1}(\F)]\text{--mod} \times K[GL_n(\F)]\text{--mod}.$$

\subsection{Splitting the rank filtration}  Similarly, if $e_n(F) = F(\F^n)$,
$$e_n: \Rep(\F;K) \ra K[M_n(\F)]\text{--mod}$$
 is an exact functor with left and right adjoints $l_n$ and $r_n$ defined by letting
$$l_n(M) = P_n \otimes_{K[M_n(\F)]} M$$
and
$$ r_n(M)(V) = \Hom_{K[M_n(\F)]}(K[\Hom(V,\F^n)],M).$$

The counit of the $(l_n,e_n)$ adjunction takes the form
$$ P_n \otimes_{K[M_n(\F)]} F(\F^r) \ra F,$$
and we let $F^n \subseteq F$ be the image of this map.

This defines a canonical increasing rank filtration of a generic representation $F$:
$$ F^0 \subseteq F^1 \subseteq F^2 \subseteq \dots \subseteq \bigcup_{n=0}^{\infty} F^n = F.$$

A reinterpretation of our main theorem goes as follows.  If $\frac{1}{p} \in K$, then $l_n(M) \simeq r_n(M)$, both $l_n$ and $r_n$ are exact, and the rank filtration splits: there is a natural decomposition
$$ F \simeq \bigoplus_{k=0}^{\infty} F^k/F^{k-1}.$$

\section{Generic representations of finite sets, and a $q=1$ analogue.}  \label{finite set sec} One might wonder to what extent the analogues of our results here are true if one considers representations of finite sets, rather than finite dimensional vector spaces over $\F_q$.

There are various categories of finite sets one might consider, two of which are the categories of based finite sets, $\Gamma$, and unbased finite sets, $\Fin$.  The analogues of our results don't hold for either of these, as the next examples show.

If $\C$ is a small category, we let $\Rep(\C;K)$ denote the category of covariant functors from $\C$ to $K$--mod.

\begin{ex} It is an observation of Pirashvili \cite{pirashvili} that $\Rep(\Gamma;K)$ is equivalent to $\Rep(\Epi;K)$, where $\Epi$ is the category of finite sets and epimorphisms.  We let $P_n^{\Epi}(S) = K[\Epi(\mathbf n, S)]$, where $\mathbf n = \{1,\dots,n\}$.  Then there is an inclusion $P_1^{\Epi} \ra P_2^{\Epi}$ which is never split.  Thus $\Rep(\Gamma;K)$ is not semisimple, even when $K$ is a field of characteristic 0.

A similar thing happens if one considers the category of contravariant functors $\Rep(\Gamma^{op};K)$.
\end{ex}

\begin{ex}  (We thank Steven Sam for showing us this example.) Let $P_1 \in \Rep(\Fin;K)$ be defined by $P_1^{\Fin}(S) = K[S]$ for all finite sets $S$, and, by abuse of notation, let $K$ denote the functor with constant value $K$.  There is a natural transformation $\epsilon: P_1^{\Fin} \ra K$ defined by letting $\epsilon([s])=1$ for all $s \in S$.  This is an epimorphism which is never split, even when $K$ is a field of characteristic 0.

Dually, there is a non-split natural inclusion $K \ra K^S$ of contravariant functors of $\Fin$.

\end{ex}

Now let $\Inj_*$ denote the subcategory of based sets $\Gamma$ having the same objects --  finite based sets $(S,s_0)$ -- and with morphisms equal to based maps $f: (S,s_0) \ra (T,t_0)$ that are one-to-one on the complement of $f^{-1}(t_0)$.

If we let $\Inj$ denote the category of finite sets and injections, and $\Iso$ denote the category of finite sets and bijections, then one notes that there is a decomposition
$$ \Inj_* = \Inj\circ_{\Iso} \Inj^{op}$$
of the sort discussed in \cite{slominska} or \cite{helm thesis, helmstutler}.  The main theorem of any of these applies to show the following theorem.

\begin{thm} For any commutative ring $K$, there are natural equivalences of $K$--linear abelian categories
$$ \Rep(\Inj_*;K) \simeq \Rep(\Iso;K) \simeq \prod_{n=0}^{\infty} K[\Sigma_n]\text{--mod}.$$
\end{thm}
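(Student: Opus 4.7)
The plan is to mimic the proof of \thmref{main thm}, replacing Kov\'acs' theorem with the analogous classical decomposition for the symmetric inverse monoid $I_n := \Inj_*(\mathbf{n}_+, \mathbf{n}_+)$, the monoid of partial injections on $\{1,\dots,n\}$, whose group of units is $\Sigma_n$. The second equivalence, $\Rep(\Iso;K) \simeq \prod_{n} K[\Sigma_n]\text{--mod}$, is a free observation: the groupoid $\Iso$ is equivalent to the disjoint union $\coprod_{n \geq 0} B\Sigma_n$, and a functor out of a coproduct of categories is a product of functors out of the summands.

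The input replacing Kov\'acs' theorem is the classical fact (Munn, Solomon) that for any commutative ring $K$ there is an algebra isomorphism
$$K[I_n] \;\simeq\; \prod_{k=0}^n M_{\binom{n}{k}}(K[\Sigma_k]).$$
Equivalently, $K[I_n \setminus \Sigma_n]$ contains an idempotent serving as its own unit, so that $e_n^{\Sigma} := 1 - e_n^S$ is a central idempotent satisfying $e_n^{\Sigma} K[I_n] e_n^{\Sigma} \simeq K[\Sigma_n]$ and $e_n^{\Sigma} \cdot [f] = 0$ for every non-bijective $f \in I_n$. No invertibility hypothesis on $K$ is required, because this idempotent is already defined over $\Z$ by inclusion--exclusion on the lattice of subsets of $\{1,\dots,n\}$.

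With this in hand, the remainder of the proof parallels \secref{main thm proof sec} verbatim. Let $P_n^{\Inj_*}(T) = K[\Inj_*(\mathbf{n}_+, T)]$ be the Yoneda-representable projectives in $\Rep(\Inj_*;K)$, and set $P_n^{\Sigma} := P_n^{\Inj_*} \cdot e_n^{\Sigma}$. Factoring each morphism $\mathbf{n}_+ \to T$ in $\Inj_*$ as a quotient (collapsing the preimage of the basepoint) followed by a based injection, one obtains the analog of \propref{prop 1}:
$$P_n^{\Inj_*} \;\simeq\; \bigoplus_{k=0}^n \binom{n}{k}\, P_k^{\Sigma},$$
so that $\{P_n^{\Sigma}\}_{n \geq 0}$ is a set of small projective generators. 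The analog of \propref{prop 2}, that $\Hom(P_m^{\Sigma}, P_n^{\Sigma}) = K[\Sigma_n]$ when $m = n$ and $0$ otherwise, follows by the same argument: any partial injection $\mathbf{n}_+ \to \mathbf{m}_+$ with $m \neq n$ factors through a non-bijective endomorphism of either $\mathbf{n}_+$ (when $m < n$) or $\mathbf{m}_+$ (when $m > n$), which is then killed by the appropriate idempotent. The multi-object Morita lemma from \secref{main thm proof sec} then yields $\Rep(\Inj_*;K) \simeq \prod_{n} K[\Sigma_n]\text{--mod}$.

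The main obstacle, paralleling the role of Kov\'acs' theorem in the original argument, is the existence of the idempotent $e_n^{\Sigma}$; here we invoke the classical structure theory of semigroup algebras of finite inverse semigroups, which, unlike for $K[M_n(\F)]$, produces the splitting over an arbitrary commutative ring. Alternatively, one can bypass explicit idempotents altogether by directly applying the factorization-system theorem of Slomi\'nska or Helm--Helmstutler cited in the text to the decomposition $\Inj_* = \Inj \circ_{\Iso} \Inj^{op}$, which immediately delivers $\Rep(\Inj_*;K) \simeq \Rep(\Iso;K)$.
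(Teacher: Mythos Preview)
Your proposal is correct, but your primary route differs from the paper's. The paper does not argue at all in the style of \thmref{main thm}; it simply observes the decomposition $\Inj_* = \Inj \circ_{\Iso} \Inj^{op}$ and invokes the factorization-system theorems of S{\l}omi\'nska and Helmstutler as black boxes, which immediately give $\Rep(\Inj_*;K)\simeq\Rep(\Iso;K)$. What you list as your ``alternative'' at the very end is in fact the paper's entire proof.

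Your main argument---building idempotents $e_n^{\Sigma}$ in $K[I_n]$ via M\"obius inversion on the Boolean lattice, defining $P_n^{\Sigma}=P_n^{\Inj_*}\cdot e_n^{\Sigma}$, and rerunning Propositions~\ref{prop 1} and~\ref{prop 2}---is a valid independent proof. It has the virtue of being explicit and parallel to \secref{main thm proof sec}, and it makes transparent why no hypothesis on $K$ is needed (the M\"obius coefficients are integers). One caution on logical order: the algebra isomorphism $K[I_n]\simeq\prod_k M_{\binom{n}{k}}(K[\Sigma_k])$ over an \emph{arbitrary} ring is stated in the paper as a \emph{corollary} of the theorem you are proving, so you should not cite it as input; what you actually need, and correctly isolate, is just the integral idempotent $e_n^{\Sigma}$, which is indeed classical for inverse semigroups. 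With that adjustment your argument stands on its own.
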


Explicitly, see \cite[Theorem 2.5]{slominska}, or apply \cite[Theorem 7.1]{helmstutler} to either \cite[Example 3.15]{helmstutler} or \cite[Example 3.19]{helmstutler}.

Furthermore, the construction in \cite{helmstutler} yields the following analogue of \corref{KM decomp cor}.  Let $R_n = \End_{\Inj_*}(\mathbf n_*)$, where $\mathbf n_* = \{0,1,\dots,n\}$ with basepoint $0$.   Then $R_n$ is the $n$th symmetric inverse semigroup, and is also called the rook monoid in \cite{solomon}.

\begin{cor} \label{rook cor} For all commutative rings $K$, there is an isomorphism of $K$--algebras
$$ K[R_n] \simeq \prod_{k=0}^n M_{\binom{n}{k}}(K[\Sigma_k]).$$
\end{cor}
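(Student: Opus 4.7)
The plan is to imitate the derivation of \corref{KM decomp cor} from \propref{prop 1} and \propref{prop 2}, with $\V(\F)$ replaced by $\Inj_*$ and general linear groups replaced by symmetric groups. Define the representable projective $P_n^{\Inj_*} \in \Rep(\Inj_*;K)$ by $P_n^{\Inj_*}(S_*) = K[\Inj_*(\mathbf n_*, S_*)]$. The Yoneda lemma then identifies
\[
\End_{\Rep(\Inj_*;K)}(P_n^{\Inj_*}) \simeq K[\Inj_*(\mathbf n_*, \mathbf n_*)] = K[R_n].
\]

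Through the equivalence $\Rep(\Inj_*;K) \simeq \prod_{k\geq 0} K[\Sigma_k]\text{--mod}$ just established, let $Q_k \in \Rep(\Inj_*;K)$ be the projective corresponding to the free rank-one module $K[\Sigma_k]$ placed in the $k$th factor. The equivalence forces $\End(Q_k) \simeq K[\Sigma_k]$ and $\Hom(Q_m,Q_n) = 0$ for $m \neq n$, which is the $\Sigma$-analogue of \propref{prop 2}.

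The main step is the $\Inj_*$-analogue of \propref{prop 1}:
\[
P_n^{\Inj_*} \;\simeq\; \bigoplus_{k=0}^n \binom{n}{k}\, Q_k.
\]
To prove this, stratify each morphism $f\colon \mathbf n_* \to S_*$ in $\Inj_*$ by the subset $A = \mathbf n \setminus f^{-1}(\text{basepoint}) \subseteq \mathbf n$ on which it is defined. For fixed $A$ of cardinality $k$, choosing a bijection $\mathbf k \xra{\sim} A$ identifies the stratum functorially with a single copy of $Q_k$, the remaining data being a based injection $\mathbf k_* \hookrightarrow S_*$ with trivial basepoint preimage. Summing over the $\binom{n}{k}$ subsets $A \subseteq \mathbf n$ of cardinality $k$ yields the claimed isomorphism; this is precisely the factorization $\Inj_* = \Inj \circ_{\Iso} \Inj^{op}$ from \cite{slominska, helmstutler} applied to the representable $P_n^{\Inj_*}$. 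Taking endomorphisms of both sides and combining with the standard identity $\End(\bigoplus^m X) \simeq M_m(\End(X))$ then gives
\[
K[R_n] \;\simeq\; \End(P_n^{\Inj_*}) \;\simeq\; \prod_{k=0}^n M_{\binom{n}{k}}(K[\Sigma_k]).
\]

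The principal obstacle is the decomposition step: in the vector space setting \propref{prop 1} rested on Kov\'acs' central idempotent $e_k^G \in K[M_k(\F)]$, available only when $p$ is invertible in $K$, whereas the present conclusion must hold over an arbitrary commutative ring. Hence $Q_k$ cannot in general be realized as $P_k^{\Inj_*}\cdot e$ for a single idempotent $e$; instead the splitting has to be extracted from the categorical factorization of $\Inj_*$ through $\Iso$, which is exactly the mechanism that produces the product equivalence invoked above.
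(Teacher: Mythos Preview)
Your strategy matches the paper's: both derive the corollary from the equivalence $\Rep(\Inj_*;K)\simeq\prod_k K[\Sigma_k]\text{--mod}$ by computing endomorphisms of the representable $P_n^{\Inj_*}$, paralleling the way \corref{KM decomp cor} was obtained from Propositions~\ref{prop 1} and~\ref{prop 2}; the paper simply compresses this to one sentence citing \cite{helmstutler}.

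Your closing paragraph, however, is mistaken, and the error is linked to a loose step earlier. There \emph{is} an idempotent $e\in K[R_k]$ with $P_k^{\Inj_*}\cdot e\simeq Q_k$, valid over every commutative ring: writing $\epsilon_B\in R_k$ for the partial identity with domain $B\subseteq\mathbf k$, set
\[
e=\sum_{B\subseteq\mathbf k}(-1)^{k-|B|}[\epsilon_B].
\]
The coefficients are $\pm 1$, so no denominators appear; this M\"obius-type idempotent is precisely why the rook-monoid statement needs no hypothesis on $K$, in contrast to Kov\'acs' $e_k^G$. This also repairs your stratification argument: the set-theoretic strata you describe are \emph{not} subfunctors of $P_n^{\Inj_*}$ (post-composition with a general $\Inj_*$-morphism can shrink the support $A$), so the phrase ``identifies the stratum functorially with a single copy of $Q_k$'' is not justified as written. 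What is true is that the images of the corresponding M\"obius idempotents $e_A\in K[R_n]$ are genuine direct summands, and these give the natural decomposition $P_n^{\Inj_*}\simeq\bigoplus_k\binom{n}{k}Q_k$ you want.
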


(Further discussion about this theorem, corollary, and generalizations may appear in a short note.)

\begin{rem} We note that our category $\Inj_*$ is the same as the category called `FI\#' in \cite{cef}, and \corref{rook cor} is a strengthening to all rings $K$ of the main result in \cite{solomon}.
\end{rem}


\begin{thebibliography}{99}

\bibitem[B]{betley} S.~Betley, {\em Stable K--theory of finite fields}, K-theory J. {\bf 17} (1999), 103--111.

\bibitem[CEF]{cef}  T.~Church, J.~S.Ellenberg, and B.~Farb, {\em FI--modules: a new approach to stability for $S_n$--representions}.  June, 2012 preprint available at arXiv:0806.1540.

\bibitem[F]{faddeev}  D.~K.~Faddeev, {\em Representations of the full semigroup of matrices over a finite field}, Dokl. Akad. Nauk SSSR {\bf 230} (1976), 1290–1293.

\bibitem[FLS]{fls} V.~Franjou, J.~Lannes, and L.~Schwartz {\em Autor de la cohomologie de MacLane des corps finis},
Inv.~Math. {\bf 115}(1994), 513--538.

\bibitem[FFSS]{ffss} V.~Franjou, E.~M.~Friedlander, A.~Skorichenko, and A.~Suslin, {\em General linear and functor cohomology over finite fields},  Ann. Math. {\bf 150}(1999), 663--728.

\bibitem[FS]{fs} E.~M.~Friedlander and A.~Suslin {\em Cohomology of finite group schemes over a field},
Inv.~Math. {\bf 127}(1997), 209--270.

%\bibitem[G]{gabriel} P.~Gabriel, {\em Des cat\'egories ab\'eliennes}, Bull. Soc. Math. France {\bf 90} (1962), 323--448.

\bibitem[H1]{helm thesis} R.~Helmstutler, {\em Quillen equivalent categories of functors}, Ph.D. dissertation, University of Virginia, 2004.
    
\bibitem[H2]{helmstutler}  R.~Helmstutler, {\em Conjugate pairs of categories and Quillen equivalent stable model categories of functors}, J. P. A. A. {\bf 218}(2014), 1302--1323.


\bibitem[HLS]{hls1}  H.-W. Henn, J. Lannes, and L. Schwartz, {\em The categories of unstable modules and  unstable algebras modulo nilpotent objects}, Amer. J. Math. {\bf 115} (1993), 1053--1106.


\bibitem[Ko]{kovacs} L.~G.~Kov\'acs, {\em Semigroup algebras of the full matrix semigroup over a finite field}, Proc.~A.~M.~S. {\bf 116}(1992), 911--919.

\bibitem[K1]{genrep1} N.~J.~Kuhn, {\em Generic representation theory of the
finite general linear groups and the Steenrod algebra: I}, Amer.~J.~Math. {\bf 116}(1994), 327--360.


\bibitem[K2]{genrep2}  Generic representations of the finite general linear groups and the Steenrod algebra:II, {\em K-theory J.} {\bf 8}(1994), 395--428.

\bibitem[K3]{genrep3}  Generic representations of the finite general linear groups and the Steenrod algebra:III, {\em K-theory J.} {\bf 9}(1995), 273--303.

\bibitem[K4]{genrepsurvey} N.~J.~Kuhn, {\em The generic representation theory of finite fields: a survey of basic structure}, Infinite Length Modules (Proc. Bielefeld, 1998), Trends in Mathematics, Birkhauser (2000), 193--212.

\bibitem[K5]{genrepfilt} N.~J.~Kuhn, {\em A stratification of generic representation theory and generalized Schur algebras}, K-theory J. {\bf 26} (2002), 15--49.


\bibitem[OP]{okninski putcha}  J.~Okni\'nski and M.~S.~Putcha, {\em Complex representations of matrix semigroups}, Trans.A.M.S. {\bf 323} (1991), 563–581.

\bibitem[Pi]{pirashvili} T.~Pirashvili, {\em Dold-Kan type theorem for $\Gamma$--groups}, Math. Ann. {\bf 318}(2000), 277--298.

\bibitem[Q]{quillen} D.~Quillen, {\em One the cohomology and K--theory of the general linear groups over a finite field},  Ann. Math. {\bf 96}(1972), 552--586.

\bibitem[Sch]{schwartz} L.~Schwartz, {\em Unstable modules over the Steenrod algebra and Sullivan's fixed point conjecture}, Chicago Lecture Series in Math, University of Chicago Press, 1994.

\bibitem[Sl]{slominska}  J.~Slomi\'nska, {\em Dold-Kan type theorems and Morita equivalence of functor categories}, J. Algebra {\bf 274} (2004), 118--1371.

\bibitem[So]{solomon}  L.~Solomon, {\em Representations of the rook monoid}, J. Algebra {\bf 256} (2002), 309--342.

\bibitem[T]{touze}  A.~Touz\'e, {\em Troesch complexes and extensions of strict polynomial functors}, Ann. Sci. Ec. Norm. Sup. {\bf 45} (2012), 53--99.

%\bibitem[W]{weibel} C.~A.~Weibel, {\em An introduction to homological algebra}, Camb.\ Studies Adv.\ Math.\ {\bf 38}, 1994.

\end{thebibliography}
\end{document}